\documentclass[11pt]{article}
\usepackage[a4paper,margin=1in]{geometry}
\usepackage{amsmath,amssymb,amsthm,mathtools}
\usepackage{microtype}
\usepackage{hyperref}
\hypersetup{colorlinks=true,linkcolor=black,citecolor=black,urlcolor=black}

\newtheorem{theorem}{Theorem}
\newtheorem{lemma}{Lemma}

\title{An Optimal Stability Theorem for H\"older's Inequality}
\author{Gangsong Leng\\
East China Normal University\\
\texttt{lenggangsong@163.com}
\and
Yifan Lu\\
Zhejiang Wenzhou High School\\
\texttt{luyifan091120@outlook.com}}
\date{}

\begin{document}

\maketitle

\begin{abstract}
We prove an optimal $L^1$ stability theorem for H\"older's inequality. Let $p>1$, $q>1$, and $1/p+1/q=1$. If $a_k,b_k\ge 0$ and
\[
   \sum_{k=1}^n a_k=\sum_{k=1}^n b_k=1,
\]
then
\[
   1-\sum_{k=1}^n a_k^{1/p}b_k^{1/q}
   \ge \frac1{2pq}\left(\sum_{k=1}^n |a_k-b_k|\right)^2 .
\]
The constant $1/(2pq)$ is best possible. We also give the corresponding integral form.
\end{abstract}

\noindent\textbf{Keywords:} H\"older's inequality; Young's inequality; stability; best constant; $L^1$ distance

\noindent\textbf{2020 Mathematics Subject Classification.} 26D15, 46E30.

\vspace{0.8em}

H\"older's inequality is a fundamental result in analysis and in the theory of inequalities. It gives an upper bound for sums or integrals of products, and it also describes the proportionality condition under which equality holds. Many classical inequalities have corresponding stability problems: if an inequality is nearly sharp, can one quantitatively show that the objects involved are close to the equality case? Aldaz studied stability for H\"older's inequality directly \cite{Aldaz2008} and related it to refinements of the AM--GM inequality \cite{Aldaz2009}. Carlen, Frank and Lieb, in their work on stability estimates for the lowest eigenvalue of a Schr\"odinger operator, also proved a H\"older-type stability estimate \cite{CFL2014}. Steele's book on inequalities also establishes a stability version of H\"older's inequality, but with a non-optimal constant \cite{Steele2004}. In this paper we discuss a natural discrete probabilistic version. Its form is simple, its constant is sharp, and its proof is completely elementary.

The classical form of H\"older's inequality states that, for $p>1$, $q>1$, and
\[
   \frac1p+\frac1q=1,
\]
one has
\[
   \sum_{k=1}^n x_k y_k
   \le
   \left(\sum_{k=1}^n x_k^p\right)^{1/p}
   \left(\sum_{k=1}^n y_k^q\right)^{1/q} .
\]
Putting
\[
   a_k=x_k^p,\qquad b_k=y_k^q,
\]
and imposing the normalization
\[
   \sum_{k=1}^n a_k=\sum_{k=1}^n b_k=1,
\]
we obtain the equivalent form
\[
   \sum_{k=1}^n a_k^{1/p}b_k^{1/q}\le 1 .
\]
The equality case corresponds to $a_k=b_k$ for $k=1,\ldots,n$. It is therefore natural to ask whether, when two probability vectors $a=(a_1,\ldots,a_n)$ and $b=(b_1,\ldots,b_n)$ deviate from the equality case, the deficit
\[
   1-\sum_{k=1}^n a_k^{1/p}b_k^{1/q}
\]
can be bounded from below in terms of the distance between $a$ and $b$. The following theorem gives an optimal answer.

\begin{theorem}[Optimal $L^1$ stability of H\"older's inequality]
Let $p>1$, $q>1$, and
\[
   \frac1p+\frac1q=1 .
\]
If $a_k,b_k\ge0$ and
\[
   \sum_{k=1}^n a_k=\sum_{k=1}^n b_k=1,
\]
then
\[
   1-\sum_{k=1}^n a_k^{1/p}b_k^{1/q}
   \ge
   \frac1{2pq}\left(\sum_{k=1}^n |a_k-b_k|\right)^2 . \tag{1}
\]
The constant $1/(2pq)$ is best possible.
\end{theorem}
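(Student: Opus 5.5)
The plan is to reduce (1) to the case $n=2$ and then prove a sharp two-variable inequality. Throughout write $\theta=1/p$, so $1-\theta=1/q$ and $\frac1{2pq}=\frac{\theta(1-\theta)}2$.

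\emph{Reduction to two points.} The function $F(x,y)=x^{\theta}y^{1-\theta}$ on $[0,\infty)^2$ is concave and positively $1$-homogeneous, hence superadditive: $F(x+x',y+y')\ge F(x,y)+F(x',y')$. Split the indices into $S=\{k: a_k\ge b_k\}$ and its complement, and put $A=\sum_{k\in S}a_k$, $B=\sum_{k\in S}b_k$. Superadditivity gives
\[
\sum_{k=1}^n a_k^{\theta}b_k^{1-\theta}\le A^{\theta}B^{1-\theta}+(1-A)^{\theta}(1-B)^{1-\theta}.
\]
Moreover $\sum_k|a_k-b_k|=2(A-B)$ is unchanged by this merging. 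It therefore suffices to prove, for $0\le B\le A\le 1$,
\[
1-A^{\theta}B^{1-\theta}-(1-A)^{\theta}(1-B)^{1-\theta}\ \ge\ 2\theta(1-\theta)(A-B)^2 . \tag{2}
\]

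\emph{The two-point inequality.} Fix $A$ and set $h(s)=A^{\theta}s^{1-\theta}+(1-A)^{\theta}(1-s)^{1-\theta}$. Then $h(A)=1$, $h'(A)=0$, and
\[
-h''(s)=\theta(1-\theta)\Bigl[A^{\theta}s^{-1-\theta}+(1-A)^{\theta}(1-s)^{-1-\theta}\Bigr].
\]
Taylor's formula with integral remainder gives
\[
1-h(B)=\int_B^A (s-B)\,\bigl(-h''(s)\bigr)\,ds .
\]
So (2) amounts to showing that this weighted integral of the bracket is at least $2(A-B)^2$.

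A pointwise bound of $4$ on the bracket would suffice, but it fails when $A$ is near $0$ or $1$. The integral must therefore be estimated using the position of $s$ relative to $A$. Here I would first try the substitution $A=\tfrac12+u$, $B=\tfrac12+v$. For $s$ between $B$ and $A$, I would bound the bracket below by its value at the symmetric point, using convexity of $s\mapsto s^{-1-\theta}$. Failing that, I would switch to proving (2) directly as a function of $(A,B)$. This means checking it on the boundary $B=0$, where it reads $1-(1-A)^{\theta}\ge 2\theta(1-\theta)A^2$; this is true by comparing derivatives, since $\theta(1-A)^{\theta-1}\ge\theta\ge 4\theta(1-\theta)A$ when $\theta\ge\tfrac12$, with a separate check for $\theta<\tfrac12$. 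It also means excluding interior critical points.

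An alternative route for this step is to view the left side of (2) as a function of $\theta$. It vanishes at $\theta=0,1$ and is concave in $\theta$, because $\theta\mapsto\sum a_k^{\theta}b_k^{1-\theta}$ is log-convex and hence convex. One would then compare it with $2\theta(1-\theta)(A-B)^2$, anchored at $\theta=\tfrac12$ by the Le Cam-type bound $1-\sum\sqrt{a_kb_k}\ge \tfrac18\bigl(\sum|a_k-b_k|\bigr)^2$. Concavity alone does not transfer the constant to every $\theta$, however, so this would need an extra ingredient.

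\emph{Sharpness.} Take $n=2$, $a=(\tfrac12+\varepsilon,\tfrac12-\varepsilon)$, $b=(\tfrac12-\varepsilon,\tfrac12+\varepsilon)$. A second-order expansion of $h$ around $s=A=\tfrac12$ gives $-h''(\tfrac12)=4\theta(1-\theta)$. Hence the deficit is $2\theta(1-\theta)(2\varepsilon)^2+o(\varepsilon^2)$, while $\bigl(\sum|a_k-b_k|\bigr)^2=16\varepsilon^2$. The ratio tends to $\theta(1-\theta)/2=1/(2pq)$, so the constant cannot be improved.

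\emph{Main obstacle.} I expect the hard step to be the global two-variable inequality (2) for all $0\le B\le A\le1$ and all $\theta\in(0,1)$. The reduction and the sharpness computation are routine, but the Hessian bound is only tight at the center and degrades near the boundary. So the argument has to trade local curvature against the size of $A-B$, and the case split $\theta\gtrless\tfrac12$ may be unavoidable.
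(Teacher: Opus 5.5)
You have a genuine gap: the two-point inequality (2) is never proved. Your reduction to two points is correct: concavity and $1$-homogeneity of $x^{\theta}y^{1-\theta}$ give superadditivity, and merging over $S=\{k:a_k\ge b_k\}$ keeps $\sum_k|a_k-b_k|=2(A-B)$. Your sharpness computation is also correct. But (2) is not a leftover technicality. It is exactly the case $n=2$ of the theorem, so all of the real content is still open.

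None of the routes you list is completed:
\begin{itemize}
\item the pointwise bound $4$ on the bracket fails, as you note;
\item the ``symmetric point'' comparison is never specified;
\item the direct two-variable route treats only the edge $B=0$ and says nothing about interior critical points;
\item the $\theta$-concavity route, by your own account, needs an ingredient you do not supply.
\end{itemize}
Even the edge check is wrong as written. Requiring $\theta\ge 4\theta(1-\theta)A$ for all $A\in[0,1]$ needs $\theta\ge\tfrac34$, not $\theta\ge\tfrac12$; at $\theta=\tfrac12$ it says $A\le\tfrac12$. The comparison $(1-A)^{\theta-1}\ge 4(1-\theta)A$ is true, but only because $(1-A)^{\theta-1}$ blows up as $A\to1$, not via the bound by $1$. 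The case $\theta<\tfrac12$ is only promised.

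The missing idea is a pointwise lower bound for the Young deficit whose denominator is a convex combination of the two arguments. The paper proves, for $a,b>0$,
\[
\theta a+(1-\theta)b-a^{\theta}b^{1-\theta}\ \ge\ \frac{\theta(1-\theta)}{2}\cdot\frac{(a-b)^2}{\frac{2-\theta}{3}\,a+\frac{1+\theta}{3}\,b}.
\]
It does this by setting $b=1$, clearing the denominator, and showing the resulting function of $x=a$ is convex with a double zero at $x=1$. Convexity reduces to a weighted AM--GM, and these particular weights make the two sides agree to third order at $a=b$.

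Now put $w_k=\frac{2-\theta}{3}a_k+\frac{1+\theta}{3}b_k$, so that $\sum_k w_k=1$. Summing the lemma over $k$ and applying Cauchy--Schwarz in the form $\sum_k (a_k-b_k)^2/w_k\ge\bigl(\sum_k|a_k-b_k|\bigr)^2/\sum_k w_k$ gives (1) for every $n$ directly. No merging and no two-variable analysis are needed.

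The same lemma would also close your own route. Your integral $\int_B^A(s-B)\bigl(-h''(s)\bigr)\,ds$ splits, term by term in the bracket, into exactly two Young deficits:
\[
\theta A+(1-\theta)B-A^{\theta}B^{1-\theta}
\quad\text{and}\quad
\theta(1-A)+(1-\theta)(1-B)-(1-A)^{\theta}(1-B)^{1-\theta}.
\]
Apply the lemma to each, with denominators $w,w'$ satisfying $w+w'=1$. Then $\frac1w+\frac1{w'}\ge 4$ yields (2).
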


Before proving Theorem 1, we first establish a strengthened form of Young's inequality. This is the key point of the proof.

\begin{lemma}[A strengthened Young inequality]
Let $p>1$, $q>1$, and
\[
   \frac1p+\frac1q=1 .
\]
Set
\[
   A=\frac{2-\frac1p}{3},\qquad
   B=\frac{1+\frac1p}{3} .
\]
Then $A+B=1$, and for all $a,b>0$,
\[
   \frac ap+\frac bq-a^{1/p}b^{1/q}
   \ge
   \frac1{2pq}\cdot \frac{(a-b)^2}{Aa+Bb}. \tag{2}
\]
\end{lemma}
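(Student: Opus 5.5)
The plan is to use homogeneity to reduce (2) to a one-variable inequality and then show it by repeated differentiation. The point of the choice of $A$ and $B$ is that it makes the third derivative of the relevant auxiliary function change sign exactly at the equality point.

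Both sides of (2) are positively homogeneous of degree $1$ in $(a,b)$. So I set $t=a/b>0$, divide by $b$, and write $\theta=1/p\in(0,1)$, so that $1/q=1-\theta$ and $\frac{1}{pq}=\theta(1-\theta)$. Then $A=(2-\theta)/3$ and $B=(1+\theta)/3$, and clearly $A+B=1$. With
\[
h(t)=\theta t+1-\theta-t^{\theta},\qquad c=\tfrac{\theta(1-\theta)}{2},
\]
and since $At+B>0$, (2) is equivalent to $g(t)\ge 0$ for all $t>0$, where
\[
g(t)=(At+B)\,h(t)-c\,(t-1)^2 .
\]

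Next I record the derivatives of $h$:
\[
h(1)=h'(1)=0,\qquad h''(t)=\theta(1-\theta)t^{\theta-2},\qquad h'''(t)=\theta(1-\theta)(\theta-2)t^{\theta-3}.
\]
Differentiating $g$ with the Leibniz rule gives $g(1)=0$ and $g'(1)=A h(1)+(A+B)h'(1)=0$. For the second derivative,
\[
g''(1)=2Ah'(1)+(A+B)h''(1)-2c=\theta(1-\theta)-\theta(1-\theta)=0 .
\]
The key computation is the third derivative:
\[
g'''(t)=3A\,h''(t)+(At+B)\,h'''(t)=\theta(1-\theta)\,t^{\theta-3}\bigl[A(1+\theta)t+B(\theta-2)\bigr].
\]
Substituting the chosen values of $A$ and $B$, the bracket becomes $\tfrac{(2-\theta)(1+\theta)}{3}\,(t-1)$. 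Hence $g'''$ has the sign of $t-1$.

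The conclusion then follows by integrating back up. Since $g'''<0$ on $(0,1)$ and $g'''>0$ on $(1,\infty)$, the function $g''$ attains its minimum at $t=1$, where it equals $0$; so $g''\ge 0$ on $(0,\infty)$. Therefore $g$ is convex with $g(1)=g'(1)=0$, which gives $g\ge 0$ everywhere, and this is (2).

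The only real obstacle is the third-derivative step: we need the bracket to vanish exactly at $t=1$. This is precisely the condition $A(1+\theta)=B(2-\theta)$ which, together with $A+B=1$, forces the stated values of $A$ and $B$. So I expect the verification to be routine once the auxiliary function $g$ is written in product form.
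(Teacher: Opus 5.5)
Your proof is correct and follows the paper's route. Your $g$ is exactly the paper's $F$ divided by $2pq$, and both arguments conclude from convexity together with $g(1)=g'(1)=0$. The only difference is how $g''\ge 0$ is obtained: the paper writes $F''$ as a positive factor times $px^{2-1/p}-(2p-1)x+(p-1)$ and applies weighted AM--GM to that bracket, whereas you differentiate once more and use that $g'''$ has the sign of $t-1$ together with $g''(1)=0$, which has the added benefit of explaining the choice of $A$ and $B$.
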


\begin{proof}
By homogeneity, it is enough to take $b=1$ and write $a=x$. Thus we only need to prove
\[
   \frac xp+\frac1q-x^{1/p}
   \ge
   \frac1{2pq}\cdot \frac{(x-1)^2}{Ax+B},\qquad x>0 .
\]
Equivalently, it is enough to show that
\[
   F(x):=2pq(Ax+B)\left(\frac xp+\frac1q-x^{1/p}\right)-(x-1)^2\ge0 .
\]
A direct computation gives
\[
   F(1)=0,
   \qquad
   F'(1)=0 .
\]
A further differentiation and simplification yield
\[
   F''(x)=
   \frac{2(p+1)}{3p(p-1)}x^{1/p-2}
   \left[px^{2-1/p}-(2p-1)x+(p-1)\right]. \tag{3}
\]
We now prove that the expression in brackets is nonnegative. By the weighted AM--GM inequality,
\[
   \frac{p}{2p-1}x^{2-1/p}
   +\frac{p-1}{2p-1}\cdot 1
   \ge
   \left(x^{2-1/p}\right)^{p/(2p-1)}=x .
\]
Multiplying both sides by $2p-1$, we get
\[
   px^{2-1/p}+(p-1)\ge(2p-1)x .
\]
It follows from (3) that
\[
   F''(x)\ge0\qquad (x>0).
\]
Thus $F$ is convex. Since $F(1)=F'(1)=0$, the point $x=1$ is a global minimum of $F$, and hence $F(x)\ge0$. The lemma follows.
\end{proof}

We now prove Theorem 1. First assume that all $a_k,b_k$ are positive. By the lemma, for each pair $a_k,b_k$ we have
\[
   \frac{a_k}{p}+\frac{b_k}{q}-a_k^{1/p}b_k^{1/q}
   \ge
   \frac1{2pq}\cdot \frac{(a_k-b_k)^2}{Aa_k+Bb_k}.
\]
Summing over $k$ gives
\[
\begin{aligned}
   1-\sum_{k=1}^n a_k^{1/p}b_k^{1/q}
   &=\sum_{k=1}^n
   \left(\frac{a_k}{p}+\frac{b_k}{q}-a_k^{1/p}b_k^{1/q}\right) \\
   &\ge
   \frac1{2pq}\sum_{k=1}^n
   \frac{(a_k-b_k)^2}{Aa_k+Bb_k} .
\end{aligned}
\]
By the Cauchy--Schwarz inequality,
\[
   \sum_{k=1}^n
   \frac{(a_k-b_k)^2}{Aa_k+Bb_k}
   \ge
   \frac{\left(\sum_{k=1}^n |a_k-b_k|\right)^2}
        {\sum_{k=1}^n(Aa_k+Bb_k)} .
\]
Moreover,
\[
   \sum_{k=1}^n(Aa_k+Bb_k)
   =A\sum_{k=1}^n a_k+B\sum_{k=1}^n b_k
   =A+B=1.
\]
Therefore
\[
   \sum_{k=1}^n
   \frac{(a_k-b_k)^2}{Aa_k+Bb_k}
   \ge
   \left(\sum_{k=1}^n |a_k-b_k|\right)^2 .
\]
Consequently,
\[
   1-\sum_{k=1}^n a_k^{1/p}b_k^{1/q}
   \ge
   \frac1{2pq}\left(\sum_{k=1}^n |a_k-b_k|\right)^2 .
\]
If some of the $a_k$ or $b_k$ are zero, the result follows by a limiting argument. For instance, let
\[
   a_k^{(\varepsilon)}=\frac{a_k+\varepsilon}{1+n\varepsilon},
   \qquad
   b_k^{(\varepsilon)}=\frac{b_k+\varepsilon}{1+n\varepsilon}.
\]
Applying the already proved positive case to $a_k^{(\varepsilon)}$ and $b_k^{(\varepsilon)}$, and then letting $\varepsilon\to0^+$, gives the nonnegative case.

It remains to prove that the constant $1/(2pq)$ cannot be increased. In fact, for every fixed integer $n\ge2$, one can construct two nonnegative $n$-tuples for which the corresponding quotient tends to $1/(2pq)$. Take
\[
   a_1=a_2=\frac12,
   \qquad
   a_3=\cdots=a_n=0,
\]
and
\[
   b_1=\frac{1+\varepsilon}{2},
   \qquad
   b_2=\frac{1-\varepsilon}{2},
   \qquad
   b_3=\cdots=b_n=0,
\]
where $0<\varepsilon<1$. Clearly
\[
   \sum_{k=1}^n a_k=\sum_{k=1}^n b_k=1 .
\]
Also,
\[
   \sum_{k=1}^n |a_k-b_k|
   =\left|\frac12-\frac{1+\varepsilon}{2}\right|
    +\left|\frac12-\frac{1-\varepsilon}{2}\right|
   =\varepsilon .
\]
On the other hand, since all terms from the third to the $n$-th vanish,
\[
\begin{aligned}
   \sum_{k=1}^n a_k^{1/p}b_k^{1/q}
   &=\left(\frac12\right)^{1/p}
     \left(\frac{1+\varepsilon}{2}\right)^{1/q}
     +\left(\frac12\right)^{1/p}
     \left(\frac{1-\varepsilon}{2}\right)^{1/q}       \\
   &=\frac12(1+\varepsilon)^{1/q}
     +\frac12(1-\varepsilon)^{1/q} .
\end{aligned}
\]
Let $\alpha=1/q$. As $\varepsilon\to0$, the binomial expansion gives
\[
   (1+\varepsilon)^\alpha
   =1+\alpha\varepsilon+\frac{\alpha(\alpha-1)}2\varepsilon^2+O(\varepsilon^3),
\]
and
\[
   (1-\varepsilon)^\alpha
   =1-\alpha\varepsilon+\frac{\alpha(\alpha-1)}2\varepsilon^2+O(\varepsilon^3).
\]
After adding the two identities, the linear terms cancel, and hence
\[
   \frac12(1+\varepsilon)^\alpha+\frac12(1-\varepsilon)^\alpha
   =1+\frac{\alpha(\alpha-1)}2\varepsilon^2+O(\varepsilon^3).
\]
Since
\[
   \alpha=\frac1q,
   \qquad
   \alpha-1=-\frac1p,
\]
we have
\[
   \alpha(\alpha-1)=-\frac1{pq} .
\]
Therefore
\[
   \sum_{k=1}^n a_k^{1/p}b_k^{1/q}
   =1-\frac1{2pq}\varepsilon^2+O(\varepsilon^3),
\]
and so
\[
   1-\sum_{k=1}^n a_k^{1/p}b_k^{1/q}
   =\frac1{2pq}\varepsilon^2+O(\varepsilon^3).
\]
Since
\[
   \left(\sum_{k=1}^n |a_k-b_k|\right)^2=\varepsilon^2,
\]
it follows that
\[
   \frac{1-\sum_{k=1}^n a_k^{1/p}b_k^{1/q}}
        {\left(\sum_{k=1}^n |a_k-b_k|\right)^2}
   =\frac1{2pq}+O(\varepsilon).
\]
Letting $\varepsilon\to0$, we obtain
\[
   \lim_{\varepsilon\to0}
   \frac{1-\sum_{k=1}^n a_k^{1/p}b_k^{1/q}}
        {\left(\sum_{k=1}^n |a_k-b_k|\right)^2}
   =\frac1{2pq} .
\]
Thus, for each fixed $n\ge2$, the constant in Theorem 1 cannot exceed $1/(2pq)$. Combined with the inequality already proved, this shows that $1/(2pq)$ is the best possible constant.

\begin{theorem}[Integral form]
Let $(X,\mathcal A,\mu)$ be a measure space, and let $p>1$, $q>1$ with $1/p+1/q=1$. If $f,g\ge0$ are integrable functions satisfying
\[
   \int_X f\,d\mu=\int_X g\,d\mu=1,
\]
then
\[
   1-\int_X f^{1/p}g^{1/q}\,d\mu
   \ge
   \frac1{2pq}
   \left(\int_X |f-g|\,d\mu\right)^2 . \tag{4}
\]
\end{theorem}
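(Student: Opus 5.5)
We mimic the discrete proof, replacing sums by integrals and applying the Lemma pointwise. Let $E=\{x\in X: f(x)>0,\ g(x)>0\}$. On $E$, apply Lemma 1 with $a=f(x)$, $b=g(x)$ to get
\[
   \frac{f}{p}+\frac{g}{q}-f^{1/p}g^{1/q}
   \ge
   \frac1{2pq}\cdot\frac{(f-g)^2}{Af+Bg}.
\]
Off $E$ the product $f^{1/p}g^{1/q}$ vanishes. There the left side equals $f/p+g/q$, while $|f-g|=f+g$ and $Af+Bg$ equals $Af$ or $Bg$. Rather than checking the Lemma's inequality at these boundary points, we treat the pointwise inequality as holding with the convention $(f-g)^2/(Af+Bg)=0$ where $f=g=0$. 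We justify it at points with exactly one of $f,g$ zero, say $g=0<f$, by letting $b\to0^+$ in (2): the left side tends to $a/p$, and the right side tends to $a/(2pqA)$, so (2) extends by continuity. The case $f=0<g$ is symmetric.

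Integrating over $X$ and using $\int f=\int g=1$ gives
\[
   1-\int_X f^{1/p}g^{1/q}\,d\mu
   \ge \frac1{2pq}\int_{\{f+g>0\}}\frac{(f-g)^2}{Af+Bg}\,d\mu .
\]
All integrands are nonnegative and measurable, and $f^{1/p}g^{1/q}\le f/p+g/q$ is integrable. The right-hand integral may be infinite a priori; in that case the left side is still finite, which would be a contradiction, so it is in fact finite.

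Next comes the integral Cauchy--Schwarz step, replacing the discrete Engel form. On $\{f+g>0\}$, write
\[
   |f-g|=\frac{|f-g|}{\sqrt{Af+Bg}}\cdot\sqrt{Af+Bg}.
\]
Cauchy--Schwarz then yields
\[
   \left(\int_X|f-g|\,d\mu\right)^2\le \int_{\{f+g>0\}}\frac{(f-g)^2}{Af+Bg}\,d\mu\cdot\int_X(Af+Bg)\,d\mu ,
\]
and the last factor equals $A+B=1$. Since $|f-g|=0$ where $f+g=0$, the restriction of the domain is harmless. Combining the two displays gives (4).

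The main obstacle is the boundary case where exactly one of $f,g$ vanishes. In the discrete proof this was handled by an $\varepsilon$-perturbation, which does not transfer directly when $\mu$ is infinite. The continuity argument above avoids that issue. Alternatively, one can check the case $b=0$ directly: it requires $1/p\ge 1/(2pqA)$, i.e.\ $2qA\ge1$, and $2qA=\tfrac{2q}{3}(1+\tfrac1q)=\tfrac{2(q+1)}{3}>1$. Symmetrically, the case $a=0$ requires $2pB\ge1$, and $2pB=\tfrac{2(p+1)}{3}>1$. Measurability and possible infinite values are routine under the nonnegativity of all integrands.
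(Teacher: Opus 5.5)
Your proof is correct and follows the paper's argument: apply the strengthened Young inequality (2) pointwise, integrate using $\int_X f\,d\mu=\int_X g\,d\mu=1$, then use integral Cauchy--Schwarz with $\int_X(Af+Bg)\,d\mu=A+B=1$. Your explicit treatment of points where exactly one of $f,g$ vanishes fills in a detail the paper leaves implicit, since the lemma is stated only for $a,b>0$ and the paper addresses only the case $f=g=0$; you handle it either by continuity or via $2qA=\tfrac{2(q+1)}{3}\ge 1$ and $2pB=\tfrac{2(p+1)}{3}\ge 1$.
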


This is the integral version of the stability theorem for H\"older's inequality. Its proof is completely parallel to the discrete case. Applying the same strengthened Young inequality almost everywhere on $X$, we have
\[
   \frac{f(x)}p+\frac{g(x)}q-f(x)^{1/p}g(x)^{1/q}
   \ge
   \frac1{2pq}\cdot
   \frac{(f(x)-g(x))^2}{Af(x)+Bg(x)}, \tag{5}
\]
where
\[
   A=\frac{2-\frac1p}{3},\qquad
   B=\frac{1+\frac1p}{3}.
\]
At points where $f(x)=g(x)=0$, the right-hand side is interpreted as $0$. Integrating (5), we get
\[
   1-\int_X f^{1/p}g^{1/q}\,d\mu
   \ge
   \frac1{2pq}
   \int_X \frac{(f-g)^2}{Af+Bg}\,d\mu .
\]
By the integral form of the Cauchy--Schwarz inequality,
\[
   \int_X \frac{(f-g)^2}{Af+Bg}\,d\mu
   \ge
   \frac{\left(\int_X |f-g|\,d\mu\right)^2}
        {\int_X (Af+Bg)\,d\mu} .
\]
Moreover,
\[
   \int_X (Af+Bg)\,d\mu
   =A\int_X f\,d\mu+B\int_X g\,d\mu
   =A+B=1.
\]
This gives (4). Thus the case of discrete probability vectors is a special case of the integral stability estimate.

\end{document}